\newtheorem{Lemma}{Lemma}[section]
\newtheorem{Theorem}[Lemma]{Theorem}
\newcommand{\C}{\mathbb{C}}
\begin{document}
\title{On a Conjecture of Harvey and Lawson}

\author{John Wermer}
\maketitle

\def\bbp{{\Bbb P}}

\section{Introduction} 

Let $\gamma$ be a smooth simple closed curve in complex projective space $\bbp^{n}$.

\smallskip
\noindent
{\bf Question:}  Under what conditions on $\gamma$ does there exist a 1-complex dimensional analytic variety  $V$ in $\bbp^{n}$ such that
$\gamma$ is the boundary of $V$ ?
\smallskip

Dolbeault and Henkin in \cite{D-H}, and Harvey and Lawson in \cite{H-L{1}} have studied this problem. Harvey and Lawson introduced the
notion of the  { \bf projective hull}   $\hat{K}$  of a compact set $K$ in $\bbp^{n}$,
which is defined as follows: Fix a point $x$ in $\bbp^{n}$  with homogeneous coordinates 
$[Z] = [Z_{0},...,Z_{n}]$ Let $P$ be a
homogeneous polynomial on $\C^{n+1}$ of degree $d$.  Define
$$
  ||P(x)|| =  \frac{|P(Z)|}{||Z||^{d}}, \ \ {\rm  where} \ \   ||Z||^{2} =        \sum|Z_{j}|^{2}.
$$
Fix a compact set $K$  in $\C^{n}$.  We define the set $\hat{K}$ as the 
collection of points $x$ in $\bbp^{n}$ such that there exists
a constant $C_{x}$ such that

\begin{equation}
||P(x)||  \  \leq   \ C_{x}^{d} \cdot \sup_K ||P||
\end{equation}
for each homogeneous polynomial  $P$ on  $\C^{n+1}$ of degree $d$ , and for all $d$.

It follows from this definition that if $x$ is a point in $\C^{n}$,then $x \epsilon \hat{K}$ if and only if there exists a constant
$c$ such that

\begin{equation}
|p(x)| \leq c^{d} \cdot \sup_K |p|
\end{equation}
for every polynomial  $p$ in $C[z_{1},...,z_{n}]$ of degree $\leq d$. 

Harvey and Lawson made the following Conjecture:

``If $\gamma$ is a real-analytic closed curve in $\C^{n}$,and if $\hat{\gamma} \ne {\gamma} $ then $\hat{\gamma}\setminus \gamma$ is a 1-complex dimensional analytic subvariety of $\bbp^{n} \setminus \gamma$.'' 

If this holds, then $\hat{\gamma}$ is either an algebraic curve which contains $\gamma$
or  $\hat{\gamma}$ is a variety having $\gamma$ as its boundary.

The motivation for requiring real-analyticity of $\gamma$, rather than merely smoothness, is given in \cite{H-L{1}}.

Let next  $X$ be a complex manifold, and denote by $H(X)$ the space of all holomorphic functions on $X$.  Let  $K$  be a compact
subset of $X$. The   { \bf hull of $K$ in $X$}, denoted $h_{X}(K)$, is defined as the set of points $x$  in  $X$ such that

\begin{equation}  
 |F(x)|   \leq \sup_K |F|\quad {\rm for \  all \ }   F \epsilon H(X)
\end{equation}

\begin{Theorem}\label{t1.1}
 Let $\gamma$ be a smooth closed curve in  $\C^{n}$. Assume
 \medskip

(i) \ $\hat{\gamma}$ is closed in $\bbp^{n}$, and
\medskip
 
(ii)  $\Omega$ is a Stein domain in $\bbp^{n}$ with $\hat{\gamma}$ contained in $\Omega$.
\medskip

\noindent
Then $\hat{\gamma} = h_{\Omega}(\gamma)$

\end{Theorem}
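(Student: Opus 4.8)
The plan is to prove the two inclusions separately, using that the projective hull is controlled by polynomial growth estimates while the Stein hull is controlled by holomorphic functions on $\Omega$.

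\medskip
\noindent\textbf{Step 1: $\hat\gamma \subseteq h_\Omega(\gamma)$.} Let $x \in \hat\gamma$, and let $F \in H(\Omega)$. The idea is to approximate $F$ near $\hat\gamma$ by polynomials. Since $\Omega$ is Stein and $\hat\gamma$ is a compact subset of $\Omega$, I would first want to know that $\hat\gamma$ is $\mathcal{O}(\Omega)$-convex, i.e. $h_\Omega(\hat\gamma) = \hat\gamma$; this should follow from the definition of $\hat\gamma$ together with a lemma (presumably proven earlier, or to be proven here) that $h_\Omega(\gamma) \subseteq \hat\gamma$ always — indeed any $F \in H(\Omega)$ restricted to a neighborhood of $\hat\gamma$ can be locally uniformly approximated by polynomials after lifting to $\C^{n+1}$, and the defining inequality $(1)$ passes to limits. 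More carefully: cover $\hat\gamma$ by an open Stein neighborhood $U \Subset \Omega$; by the Oka–Weil theorem $F|_U$ is a uniform limit on $\hat\gamma$ of restrictions of functions holomorphic on $\bbp^n$... but $\bbp^n$ carries no nonconstant holomorphic functions, so instead one works with the cone: a point $x = [Z]$ in $\hat\gamma$ satisfies $(1)$, and writing $F$ locally as a ratio of sections of $\mathcal{O}(d)$ one estimates $|F(x)|$ against $\sup_\gamma|F|$ by feeding homogeneous polynomials into $(1)$ and letting $d \to \infty$. The key quantitative point is that the constant $C_x$ is independent of $d$, so the $d$-th roots tame the exponential growth and yield $|F(x)| \le \sup_\gamma |F|$.

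\medskip
\noindent\textbf{Step 2: $h_\Omega(\gamma) \subseteq \hat\gamma$.} Conversely let $x \in h_\Omega(\gamma)$. I need a constant $C_x$ making $(1)$ hold for every homogeneous $P$ of every degree $d$. Given such a $P$, the function $P(Z)/\langle Z, a\rangle^d$ is holomorphic on the affine chart where the linear form $\langle Z,a\rangle \ne 0$; the problem is that such a chart need not contain all of $\gamma$ or of $\Omega$. So instead one should use that $\Omega$ is Stein to build a global holomorphic function on $\Omega$ out of $P$. Here the natural move is: the line bundle $\mathcal{O}(d)$ restricted to the Stein domain $\Omega$ is holomorphically trivial (Stein domains have trivial Picard group up to the topology — more precisely $H^2(\Omega,\Z)$ may be nontrivial for $\Omega \subset \bbp^n$, but on $\bbp^n$ restricted to a Stein open set the obstruction is an integral class that might survive; however $\mathcal{O}(d)$ is a power of the hyperplane bundle, and one can often choose a trivialization on a slightly shrunk Stein neighborhood of $\hat\gamma$). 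Once $\mathcal{O}(d)|_{\Omega'}$ is trivialized by a nowhere-zero holomorphic section $\sigma$, the ratio $F_P := P/\sigma \in H(\Omega')$, and applying $(3)$ to $F_P$ gives $|F_P(x)| \le \sup_\gamma |F_P|$, which after multiplying back by the pointwise norm of $\sigma$ (bounded above and below on the compact $\hat\gamma \cup \gamma$) produces exactly an estimate of the form $(1)$ with a constant $C_x$ that one must check depends only on $x$ and the trivialization, not on $P$ — the degree-dependence being absorbed because $\sigma$ can be taken to be the $d$-th power of a fixed nowhere-zero section of $\mathcal{O}(1)|_{\Omega'}$.

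\medskip
\noindent\textbf{Main obstacle.} The crux is the triviality of $\mathcal{O}(1)$ on a Stein neighborhood of $\hat\gamma$: a Stein open subset of $\bbp^n$ need not have vanishing $H^2(\cdot,\Z)$, so $\mathcal{O}(1)$ need not be holomorphically trivial on all of $\Omega$. The way around this is to use that $\gamma$ is a curve (so $\hat\gamma$, if it is not all of $\bbp^n$, is small — 1-dimensional) and to work on a Runge–Stein neighborhood basis of $\hat\gamma$ on which $H^2$ does vanish, or alternatively to avoid trivializing the bundle entirely and instead estimate $\|P(x)\|$ directly by choosing, for each $x$, a hyperplane avoiding $x$ and using a partition-of-unity/$\bar\partial$ argument on $\Omega$ to correct the local ratios $P/\ell^d$ into a global holomorphic function with controlled sup-norm on $\gamma$. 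Getting the $\bar\partial$-correction to not destroy the degree-uniformity of the constant $C_x$ — that is, to keep the $d$-th root bounded — is the delicate estimate, and I expect it to be the technical heart of the argument.
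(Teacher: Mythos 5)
Your Step~1 does not work as described, and the failure is not a technicality. From $x\in\hat{\gamma}$ you only obtain $|p(x)|\le C_{x}^{d}\sup_{\gamma}|p|$ for polynomials $p$ of degree $\le d$; if you approximate $F\in H(\Omega)$ near $\hat{\gamma}$ by polynomials $p_{k}$ of degrees $d_{k}\to\infty$, the bound you inherit involves $C_{x}^{d_{k}}$, which blows up when $C_{x}>1$. There is no ``$d$-th root'' to take, because $F$ is not the $d$-th power of its approximants; your claim that the constant's independence of $d$ ``tames the exponential growth'' is exactly backwards. Indeed the inclusion $\hat{\gamma}\subseteq h_{\Omega}(\gamma)$ cannot follow from approximation alone: the projective hull of the unit circle in $\C\subset\mathbb{P}^{1}$ is all of $\mathbb{P}^{1}$, so in general $\hat{\gamma}$ is far larger than any holomorphic hull, and hypotheses (i) and (ii) must enter essentially. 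The paper's mechanism is entirely different: it forms the uniform algebra $A$ obtained by closing $H(\Omega)|_{\hat{\gamma}}$, shows that every peak point of $A$ lies on $\gamma$ by combining Harvey--Lawson's local maximum modulus principle for projective hulls (their Theorem 12.8: $\hat{\gamma}\cap U$ lies in the polynomial hull of $\hat{\gamma}\cap\partial U$ for a suitable affine, polynomially convex $\overline{U}$ disjoint from $\gamma$) with Oka--Weil approximation of $H(\Omega)$ by polynomials on $\overline{U}$, and then invokes the fact that each element of a uniform algebra attains its maximum modulus at a peak point. That local maximum principle is the missing ingredient; without it or a substitute, your Step~1 has no content.

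Your Step~2 has the right idea but stalls on an obstacle the argument never actually meets. You do not need a nowhere-vanishing section of $\mathcal{O}(1)$ over $\Omega$ (and, as you note, $H^{2}(\Omega,\mathbb{Z})$ may obstruct one); for a fixed $x\in h_{\Omega}(\gamma)$ you only need a single function $\Lambda\in H(\Omega)$ vanishing on $l\cap\Omega$, where $l$ is a hyperplane avoiding $x$, with $\Lambda(x)\ne 0$. Such a $\Lambda$ exists on any Stein $\Omega$ by Cartan's Theorem~A applied to the ideal sheaf of $l\cap\Omega$. Then for any polynomial $P$ of degree $d$ in the affine coordinates of $\mathbb{P}^{n}\setminus l$, the product $\Lambda^{d}P$ is holomorphic on all of $\Omega$, and applying the defining inequality of $h_{\Omega}(\gamma)$ to $\Lambda^{d}P$ yields $|P(x)|\le C_{x}^{d}\sup_{\gamma}|P|$ with $C_{x}=\sup_{\gamma}|\Lambda|/|\Lambda(x)|$, manifestly independent of $P$ and $d$. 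This uses the affine characterization (2) of $\hat{\gamma}$ and requires no bundle trivialization, no shrinking to kill $H^{2}$, and no $\bar{\partial}$-correction; the degree-uniformity of the constant, which you correctly identify as the delicate point, is automatic in this formulation.
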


\section{ Proof of Theorem 1.1}
\begin{proof}

By hypothesis, there exists a Stein domain $\Omega$ in $\bbp^{n}$ with 
$\hat{\gamma}$ contained in $\Omega$. Also, $\hat{\gamma}$
is closed by hypothesis, and hence compact.

We now define $A$ as the uniform closure on $\hat{\gamma}$ of
 $H(\Omega)$, restricted to $\hat{\gamma}$. Since $\Omega$ is Stein, 
 $H(\Omega)$ separates points of $\hat{\gamma}$, and so $A$ is a 
 uniform algebra on $\hat{\gamma}$ 

Let $y_{0}$ be a peak-point of $A$ on $\hat{\gamma}$,
 i.e. $y_{0}$ is a point of $\hat{\gamma}$ such that there exists $F^{*}$ in $A$,
with $F^{*}(y_{0}) = 1$ and $|F^{*}| < 1$ on $\hat{\gamma} \setminus {y_{0}}$.
 
We claim that $y_{0}$ is in $\gamma$. Suppose not. Then we can 
choose an open neighborhood $U$ of $y_{0}$ in $\bbp^{n}$ with $\bar{U}$
compact and $\bar{U} \cap \gamma = \emptyset$.Without loss of generality, 
$\bar{U}$ is contained in an affine subspce $W$ of $\bbp^{n}$
and $\bar{U}$ is polynomially convex in $W$.

Theorem 12.8 in \cite{H-L{1}}  now yields
 \begin{equation}
  \hat{\gamma} \cap U \ \ {\rm is \  contained \  in\  the \ polynomial \  hull \  of \ } 
\hat{\gamma} \cap \delta U
\end{equation}
It follows that if $P$ is a polynomial on $W$,then
$$
 |P(y_{0})|  \ \leq \ \max|P|\ \ {\rm over }\  \hat{\gamma} \cap\delta U.  
$$
Since $\bar{U}$ is polynomially convex in $W$, every $F$ in $H(\Omega)$
is uniformly approximable on $\bar{U}$ by polynomials on  $W$.
So for $F$ in $H(\Omega)$, we have
 \begin{equation}
    |F(y_{0})| \ \leq \ \max|F| \ \ {\rm over }\  \hat{\gamma} \cap \delta U.
 \end{equation}
Our function $F^{*}$ above satisfies $F^{*}(y_{0}) = 1 $ and 
$|F^{*}| < 1$ on $\hat{\gamma} \cap \delta U$. We choose $F$ in $H(\Omega)$
so close to $F^{*}$ on $\hat{\gamma}$ that
\begin{equation}
   |F(y_{0})| \ > \ \max|F|   \ \ {\rm over }\    \hat{\gamma} \cap \delta U.
\end{equation}
Assertions  (5) and (6) are in contradiction. So $y_{0}$ is in $\gamma$, as claimed.

Choose now an element $F$  in $A$. By Th. 12.10 in \cite{Ga} , there exists a peak-point $p$ of $A$ such that 
$\max|F|$ over $\hat{\gamma}$ equals $|F(p)|$.

By the preceding, $p$ is in $\gamma$. Hence,  $\max|F|$ over $\hat{\gamma} \leq \max|F|$ over $\gamma$. This holds in particular for $F$
in $H(\Omega)$. So we have
 \begin{equation}
     \hat{\gamma} \subset{h_{\Omega}}(\gamma).
 \end{equation}

To prove Theorem 1.1 we need to prove the reverse inclusion. Fix a point $x$ in $h_{\Omega}(\gamma)$ We choose a complex hyperplane  $l$
of $\bbp^{n}$ such that $x$ is not in $l$. Let $z_{1},...,z_{n}$ be affine coordinates on the affine space $\bbp^{n} \setminus {l}$.
Each $z_{j}$ extends as a meromorphic function to $\bbp^{n}$, with pole set $l$.
 
Since $\Omega$ is a Stein manifold, there exists a holomorphic function $\Lambda$ on $\Omega$ such that $\Lambda$ vanishes on $l \cap \Omega$
and $\Lambda(x) \neq 0$. It follows that , for all $j$, $\Lambda \times z_{j}$ is holomorphic on $l \cap\Omega$, and hence is holomorphic on all of $\Omega$.

Let $J$ denote the multi-index $(j_{1},...,j_{n})$, and let $z^{J}$ denote 
the product of the monomials $z_{r}^{j_{r} }$ for $r = 1,...,n$
Let $P$ be the polynomial which is the sum of terms $c_{J}z^{J}$ taken over the multi-indices $J$, where $c_{J}$ is a scalar.  Let $d = deg P$.
Then if $c_J\neq0$, we have 
   $\sum_{s=1}^{n} j_{s} \leq d$.  Hence
$$
\Lambda^{d} \times P = \sum c_{J}\Lambda^{d}z^{J} = \sum c_{J}(\Lambda z_{1})^{j_{1}}...(\Lambda z_{n})^{j_{n}} \Lambda^{d-S},
$$
 where $S = \sum_{s=1}^{n} j_{s}$. Hence $\Lambda^{d} \times P$ is holomorphic on $\Omega$. Also, $\Lambda(x) \neq 0$.  Since $x$  is
in $h_{\Omega}(\gamma)$, we have
 \begin{equation}
   |(\Lambda^{d}P)(x)| \leq \max|\Lambda^{d}P| \ \ {\rm  over} \ \gamma.
 \end{equation}
We now argue as in [2], proof of Proposition 2.3:
It follows from (8) that
 $$
 |\Lambda(x)|^{d}\times|P(x)| \leq ( \max|\Lambda|)^{d} \times \max |P|,
$$
where the maxima are taken over $\gamma$. We now put
$$
 C_{x} = \frac{\max|\Lambda|}{|\Lambda(x)|}.
 $$
  Then
 \begin{equation}
  |P(x)| \leq{ C_{x}^{d} \max|P(x)|}
 \end{equation}
Since (9) holds for all $P$, we have that $x$ is in $\hat{\gamma}$. Thus $h_{\Omega}(\gamma)\subset \hat{\gamma}$.
So $h_{\Omega}(\gamma) = \hat{\gamma}$, and we are done.

\end{proof}

\section{The hull of a curve in a Stein manifold}

\begin{Theorem}
 Let $X$ be a Stein manifold, and let $\beta$ be a real-analytic closed curve in X. Then
$$
 h_{X}(\beta) = \beta \cup{V} 
$$
where $V$ is a 1-complex dimensional subvariety of 
$ X \setminus \beta$, $\beta$ and $V$ are disjoint, and $\beta$ is the boundary of $V$,

\end{Theorem}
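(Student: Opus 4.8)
The strategy is to embed $X$ as a closed complex submanifold of some $\C^{N}$ and then appeal to the known structure theorem for the polynomial hull of a real-analytic closed curve in $\C^{N}$. We may assume $h_{X}(\beta)\neq\beta$, since otherwise there is nothing to prove.

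By the embedding theorem for Stein manifolds there is a proper holomorphic embedding of $X$ onto a closed complex submanifold of $\C^{N}$ for some $N$; identifying $X$ with its image, $\beta$ becomes a real-analytic closed curve in $\C^{N}$. Let $\hat{\beta}$ denote the polynomial hull of $\beta$ in $\C^{N}$. The first task is to prove
$$
 h_{X}(\beta)\;=\;\hat{\beta}\;\subset\;X .
$$
This follows from three remarks. (a) Each polynomial on $\C^{N}$ restricts to a member of $H(X)$, whence $h_{X}(\beta)\subset\hat{\beta}$. (b) If $p\in\C^{N}\setminus X$, then, $X$ being a closed analytic subset of the Stein manifold $\C^{N}$, Cartan's theorems furnish an entire function $f$ on $\C^{N}$ with $f\equiv0$ on $X$ and $f(p)\neq0$; approximating $f$ by Taylor polynomials uniformly on the compact set $\beta\cup\{p\}$ produces polynomials $P_{k}$ with $|P_{k}(p)|\to|f(p)|>0$ and $\sup_{\beta}|P_{k}|\to0$, so $p\notin\hat{\beta}$; thus $\hat{\beta}\subset X$. (c) Since $X$ is a closed submanifold of the Stein manifold $\C^{N}$, every $g\in H(X)$ extends to an entire function on $\C^{N}$ (Oka--Cartan extension); approximating that extension by polynomials and using (b) shows $\hat{\beta}\subset h_{X}(\beta)$. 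Combining (a)--(c) yields the displayed identity; in particular $h_{X}(\beta)$ is compact.

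Now I invoke the classical result on boundaries of holomorphic $1$-chains / hulls of real-analytic curves in $\C^{N}$ (see \cite{D-H} and \cite{H-L{1}}): because $\beta$ is real-analytic and connected and $\hat{\beta}\neq\beta$, the set $V:=\hat{\beta}\setminus\beta$ is a nonempty pure $1$-complex dimensional analytic subvariety of $\C^{N}\setminus\beta$, disjoint from $\beta$, with $\overline{V}=V\cup\beta$ and with $\beta$ as its boundary. By the first task $V\subset\hat{\beta}\subset X$; since $V$ is, locally, the common zero set of holomorphic functions on $\C^{N}\setminus\beta$, it is an analytic subvariety of the closed submanifold $X\setminus\beta$ of $\C^{N}\setminus\beta$. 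Hence $h_{X}(\beta)=\hat{\beta}=\beta\cup V$, with $V$ a $1$-complex dimensional subvariety of $X\setminus\beta$ disjoint from $\beta$ and having $\beta$ as its boundary, which is the assertion.

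The routine core of the argument is the reduction $h_{X}(\beta)=\hat{\beta}$; its only delicate point is remark (b), that the polynomial hull does not leave $X$, and that is precisely where the Steinness of $X$ enters. The genuinely substantive ingredient is the cited structure theorem for real-analytic curves in $\C^{N}$, and it is there that real-analyticity (rather than mere smoothness) is essential, as explained in the Introduction; the connectedness of $\beta$ is what guarantees that all of $\beta$, not a proper subset, is the boundary of $V$.
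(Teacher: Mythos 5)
Your proof takes essentially the same route as the paper: embed $X$ as a closed complex submanifold of $\C^{N}$, use the zero-set and extension properties of Stein submanifolds to identify $h_{X}(\beta)$ with the polynomial hull $\hat{\beta}\subset X$, and invoke the known structure theorem for real-analytic curves in $\C^{N}$ --- indeed you supply considerably more detail than the paper's three-line sketch. The only quibble is attribution: the structure theorem you invoke (the polynomial hull of a real-analytic closed curve in $\C^{N}$ is the curve union a one-dimensional variety bounded by it) is the result of \cite{W}, which is what the paper cites, rather than \cite{D-H} or \cite{H-L{1}}.
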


\begin{proof}
Theorem 3.1 follows from the fact that it holds when $X = \C^{n}$ (\cite{W}), together with the following well-known properties of a Stein manifold $X$.

(a) $X$ admits a biholomorphic embedding  $\Phi$ into $\C^{N}$ for some $N$.

(b) Every holomorphic submanifold $Y$ of $\C^{N}$ is the zero set of some vector-valued entire function on $\C^{N}$, and

(c)Every holomorphic function on $Y$ admits a holomorphic extension to an entire funciton on $\C^{N}$.

\end{proof}

\noindent
{\bf Note 1: }\ If the Conjecture is true, then conditions (i) and (ii) in Theorem 1.1 are satisfied by $\gamma$. We see this as follows:

Put $V = \hat{\gamma} \setminus \gamma$.Assume that $V$ is a subvariety of $\bbp^{n} \setminus\gamma$, with boundary $\gamma$.
Then $\hat{\gamma} = V \cup {bd} V$, and so $\hat{\gamma}$ is closed in $\bbp^{n}$. So (i) holds.

Since $\gamma$ is real-analytic, $\gamma$ lies on some Riemann surface ``collar'' $S$, and $S$ fits together with $V$ to form a holomorphic subvariety $V^{*}$ of some open subset $O$ of $\bbp^{n}$, with $V^{*}$ a relatively closed subset of $O$. Then $V^{*}$ is a Stein subspace of $O$. Hence by a result of Siu, \cite{Si}, $V^{*}$ admits a Stein neighborhood $\Omega$
in $O$. Then

$\hat{\gamma}=V \cup{\gamma} \subset{V^{*}}\subset{\Omega}$. 
so (ii) holds.

\noindent
{\bf Note 2: }\  In Theorem 3.1, with $\beta$ assumed real-analytic, $\beta$ is the boundary of $V$ in the sense of ``manifold with boundary''.
If $\beta$ is merely assumed smooth, $\beta$ is the boundary of $V$ in a more general sense. (See, \cite{H-L{2}}, Th. 7.2).


\begin{thebibliography}{4}
\bibitem{D-H}
P. Dolbeault and G. Henkin \emph{Surfaces de Riemann de bord donne dans $CP^{n}$}, Contributions to Complex Analysis and Analytic Geometry, Aspects of Math., E 26, Viehwewg, Braunschweig,(1994), 163-187

\bibitem{H-L{1}} 
F.R. Harvey and H.B. Lawson \emph{Projective Hulls and the Projective Gelfand Transform}, Asian J. Math., 10, No.3,(2006),607-646

\bibitem{Ga}
T.W. Gamelin \emph{Uniform Algebras}, Prentice-Hall, Inc (1969)

\bibitem{W}
J. Wermer \emph{The Hull of a Curve in  $/C^{n}$}, Ann of Math., 68 (1958),550-561

\bibitem{H-L{2}} 
F.R. Harvey and H.B. Lawson \emph{On Boundaries of Complex Analyic Varieties I}, Ann. of Math. 102 (1975), 223-290

\bibitem{Si}
Siu, Y.T. \emph{Every Stein Subvariety admits a Stein Neighborhood},Invent. Math. 38 (1976), 89-100

  
 











  









  









 

\end{thebibliography}
\end{document}